\newtheorem{theorem}[subsubsection]{Theorem}
\newtheorem{lemma}[theorem]{Lemma}
\newtheorem{corollary}[subsubsection]{Corollary}
\theoremstyle{definition}
\newtheorem{definition}[subsubsection]{Definition}
\newtheorem{remark}[theorem]{Remark}
\newtheorem{example}[subsubsection]{Example}
\numberwithin{equation}{section}
\newcommand{\mZ}{\mathbb{Z}}
\newcommand{\bk}{\Bbbk}
\newcommand{\Ind}{\operatorname{Ind}}
\newcommand{\PSh}{\operatorname{PSh}}
\newcommand{\End}{\operatorname{End}}
\newcommand{\Hom}{\operatorname{Hom}}
\newcommand{\Sym}{\mathcal{S}ym}
\newcommand{\Rep}{\operatorname{Rep}}
\newcommand{\bx}{\mathbf{x}}
\newcommand{\bv}{\mathbf{v}}
\newcommand{\mS}{\mathbb{S}}
\newcommand{\cA}{\mathcal{A}}
\newcommand{\cC}{\mathcal{C}}
\newcommand{\cB}{\mathcal{B}}
\newcommand{\cO}{\mathcal{O}}
\newcommand{\op}{\mathrm{op}}
\newcommand{\Vecc}{\mathtt{Vec}}
\newcommand{\sVec}{\mathtt{sVec}}
\newcommand{\unit}{\mathbf{1}}
\newcommand{\mG}{\mathbb{G}}
\newcommand{\cI}{\mathcal{I}}
\newcommand{\ev}{\mathrm{ev}}
\newcommand{\co}{\mathrm{co}}
\newcommand{\id}{\mathrm{id}}
\begin{document}

\title[]{Invertible exterior powers}

\author{Kevin Coulembier}
\address{School of Mathematics and Statistics, University of Sydney, NSW 2006, Australia}
\email{kevin.coulembier@sydney.edu.au}

\maketitle

\begin{abstract}
We present a proof of the fact that in a symmetric monoidal category over a field of characteristic zero, objects with an invertible exterior power are rigid. As an application we prove two recent conjectures on dimensions in symmetric monoidal categories by Baez, Moeller and Trimble and further conjectures by Baez and Trimble.
\end{abstract}


\section{Preliminaries}

We refer to \cite{EGNO} for details on monoidal categories. Let $K$ be a commutative ring. Following \cite{BMT} we will refer to a $K$-linear symmetric monoidal category $(\cC,\otimes,\unit)$ that is idempotent complete and additive as a {\em 2-rig over~$K$}. The braiding on a symmetric monoidal category $\cC$ will be denoted by $\sigma_{X,Y}: X\otimes Y\xrightarrow{\sim}Y\otimes X$ for $X,Y\in\cC$. For $n\in\mZ_{>0}$ and $X\in\cC$ with $\cC$ a 2-rig over a ring $K$ in whch $n!$ is invertible, we denote by $S^nX$ and $\wedge^nX$ the $n$-th symmetric and exterior power, see \cite[\S 8]{BMT}.

Let $\cC$ be a symmetric monoidal category.
A dual of an object $X\in \cC$  is a triple $(X^\ast,\ev_X,\co_X)$ of an object $X^\ast\in \cC$ with morphisms $\ev_X:X^\ast\otimes X\to\unit$ and $\co_X:\unit\to X\otimes X^\ast$ satisfying the `snake relations' $(X\otimes \ev_X)\circ (\co_X\otimes X)=\id_X$ and $(\ev_X\otimes X^\ast)\circ (X^\ast\otimes \co_X)=\id_{X^\ast}$. An object is called rigid if it has a dual (automatically unique up to isomorphism). The categorical dimension $\dim X\in\End(\unit)$ of a rigid $X\in\cC$ is given by $\ev_X\circ \sigma_{X,X^\ast}\circ\co_X$.

 An object $X\in \cC$ is invertible if there exists a $Y\in\cC$ such that $X\otimes Y\simeq \unit$. It then follows that $Y$ can be made into a dual of $X$. If $\cC$ is $\mZ[1/2]$-linear, we then have $\dim X=\pm 1$ (which stands for $\pm 1\cdot\id_{\unit}$), with $\dim X=1$ if and only if $\wedge^2 X=0$ and $\dim X=-1$ if and only if $S^2 X=0$. Following \cite{BMT} we say that in the former case $X$ is a bosonic line object and in the latter case a fermionic line object.

Given any $K$-linear symmetric monoidal category $\cC_0$, one can `Cauchy' complete it canonically into a 2-rig $\cC$ over $K$, by formally adjoining direct summands (idempotents) and direct sums, see \cite[\S 1.2]{AK}. In the idempotent completion of $\cC_0$, objects are given by pairs $(X,e)$, with $X\in\cC$ and $e\in \End(X)$ an idempotent. The morphisms $(X,e)\to (X',e')$ are the morphisms in $\cC$ of the form $e'\circ f\circ e$ for some $f\in \Hom(X,X')$. 

We will use the above notation in general for the summand $\wedge^n X$ of $X^{\otimes n}$ and we denote by $e_n$ the corresponding idempotent, the skew symmetriser $a_n:=\sum_{x\in S_n}(-1)^{|x|}x$ for $S_n$ divided by $n!$. By abuse of notation we thus write $a_n$ and $e_n$ for elements of $\mZ S_n$ and $\mZ[1/n!]S_n$, as well as for their images in $\End(X^{\otimes n})$.

Let $\bk$ be a field for the rest of the preliminaries. We denote by $\Sym_0$ the strict symmetric monoidal category with objects labelled by the natural numbers and endomorphism algebras given by the group algebras $\bk S_n$. Its Cauchy completion $\Sym$ is the universal 2-rig on one object, which we denote by $\bx$, see \cite[\S 1]{BMT}. Concretely, given a 2-rig over $\bk$, the assignment $F\mapsto F(X)$ yields an equivalence between the category of $\bk$-linear symmetric monoidal functors $\Sym\to \cC$ with monoidal natural transformations and the category $\cC$.

For the algebraic group $GL_n$ over $\bk$, we denote by $\Rep_{\bk} GL_n$ its monoidal category of rational representations. If $\bk$ is of characteristic zero, and $V$ denotes the natural $n$-dimensional representation, then $\Rep_{\bk} GL_n$ is the Cauchy completion of its full symmetric monoidal subcategory $[\Rep_{\bk} GL_n]_0$ with objects given by tensor products of $V$ and $V^\ast$. Let $M_n$ be the algebraic monoid of $n\times n$-matrices. Then its representation category $\Rep_{\bk} M_n$ is the full subcategory of $\Rep_{\bk} GL_n$ of polynomial representations. If $\bk$ is of characteristic zero then the 2-rig $\Rep_{\bk} M_n$ is the Cauchy completion of the full symmetric monoidal subcategory $[\Rep_{\bk} M_n]_0$ of $\Rep_{\bk}GL_n$ with objects the tensor powers of $V$.

For a self-contained proof of the following lemma in characteristic zero, see \cite{BT}.
\begin{lemma}\label{LemMn}Let $\bk$ be a field.
$\Rep_{\bk} M_n$ is the universal 2-rig over $\bk$ on one object on which the skew symmetriser $a_{n+1}$ of $\bk S_{n+1}$ vanishes.
\end{lemma}
\begin{proof}
By construction, $\Sym/I_n$ is the universal 2-rig on an object on which $a_{n+1}$ vanishes, for $I_n$ the tensor ideal generated by $a_{n+1}$. It is known that $\Sym/I_n$ is equivalent to $\Rep_{\bk} M_n$. Indeed, that $\Sym_0\to [\Rep M_n]_0$ is full follows from \cite[Theorem~4.1]{DP} and that the kernel is generated by $a_{n+1}$ follows from \cite[Theorem~4.2]{DP}.
\end{proof}

For $t\in \bk$, the category $(\Rep_{\bk} GL)_t$ from \cite[\S 10]{Del07} is the universal\footnote{Contrary to the notion of universality used for $\Sym$ and $\Rep M_n$, this means that there is an equivalence $F\mapsto F(\bx_t)$ between the category of $\bk$-linear symmetric monoidal functors $F:(\Rep GL)_t\to \cC$ and the subcategory of $\cC$ comprising rigid objects of categorical dimension $t$ and {\em isomorphisms} between them. All universal 2-rigs below (except Example~\ref{RemMn}) are to be interpreted in this sense.} 2-rig over $\bk$ on a rigid object $\bx_t$ of categorical dimension $t$ (meaning $t\cdot\id_{\unit}$), see \cite[Proposition~10.3]{Del07}. The category $(\Rep_{\bk} GL)_t$ is the Cauchy completion of the oriented Brauer category, a diagrammatic category which is a variant of $\Sym_0$.

We say that an object $X\in\cC$ is `symmetrically self-dual' if it is its own dual and $\ev_X\circ\sigma_{X,X}=\ev_X$. Similarly, $X$ is skew-symmetrically self-dual if $\ev_X\circ\sigma_{X,X}=-\ev_X$.
The category $(\Rep_{\bk} O)_t$ from \cite[\S 9]{Del07} is the universal 2-rig over $\bk$ on a symmetrically self-dual object $\bv_t$ of categorical dimension $t$, see \cite[Proposition~10.3]{Del07}. The category $(\Rep_{\bk} O)_t$ is the Cauchy completion the Brauer category $\cB(t)$. 

Assume $\mathrm{char}(\bk)\not=2$. The $2$-rig $\sVec$ of finite-dimensional supervector spaces is the category of $\mZ/2$-graded vector spaces where, for $\bar{\unit}$ the one-dimensional space in odd degree, $\sigma_{\bar{\unit},\bar{\unit}}=-1$. In other words, $\bar{\unit}$ is a fermionic line object. For an affine group scheme $G$ (we will not need the standard generality of an affine group superscheme) and a central group homomorphism $\varepsilon :\mZ/2\to G$ we consider the 2-rig $\Rep (G,\varepsilon)$ of representations of $G$ in the category of supervector spaces such that the generator of $\mZ/2$, via restriction along $\varepsilon$, acts by $1$ on even vectors and by $-1$ on 
odd vectors, see \cite[\S 0.3]{Del02}. If $\varepsilon$ is trivial then $\Rep (G,\varepsilon)=\Rep G$ and if $\varepsilon$ is the identity then $\Rep (\mZ/2,\varepsilon)=\sVec$. 

When we write $\varepsilon: \mZ/2\to GL_n$ it stands for the homomorphism that sends the generator to diagonal matrix with entries $-1$. Let $O_m<GL_m$ and $Sp_{2n}<GL_{2n}$ denote subgroups fixing a non-degenerate symmetric and skew symmetric form. The above homomorphism $\varepsilon$ restricts to $\varepsilon: \mZ/2\to O_m$ and $\varepsilon:\mZ/2\to Sp_{2n}$.

We will use $\otimes$ also for the coproduct in the category of 2-rigs. We will only need it for $\cC\otimes\sVec$, for a 2-rig $\cC$, which is just the category of $\mZ/2$-graded objects in $\cC$, which can be interpreted as objects $X\otimes\unit\oplus Y\otimes \bar{\unit}$, where $\sigma_{Y\otimes \bar{\unit},Z\otimes \bar{\unit}}$ corresponds to $-\sigma_{Y,Z}$ (using $Y\otimes \bar{\unit}\otimes Z\otimes \bar{\unit}\simeq Y\otimes Z$). In particular, $\Rep(G,\varepsilon)$ lives naturally in $(\Rep G)\otimes\sVec$, where the latter is just the category of all representations of $G$ in $\sVec$.

%
%

\section{Results}

\subsection{Invertible exterior powers}
Fix $n\in\mZ_{>1}$.
Let $K$ be a commutative ring in which $n!$ is invertible. Let $\cC$ be a $K$-linear symmetric monoidal category and consider $X\in\cC$. The following result is undoubtedly known, a proof for the special case $\wedge^n X\simeq \unit$ is given in \cite[Lemma~3.6]{DR}.

\begin{theorem}\label{ThmRig}
If $\wedge^nX$ is invertible, then $X$ is rigid.
\end{theorem}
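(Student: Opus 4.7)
I would define the candidate dual $X^{\ast}:=\wedge^{n-1}X\otimes L^{-1}$, with $L:=\wedge^{n}X$, motivated by the classical formula in $\Rep_{\bk}GL_n$. Since each $\wedge^k X=(X^{\otimes k},e_k)$ sits as a direct summand of $X^{\otimes k}$, one has canonical natural morphisms $\alpha\colon L\to X\otimes\wedge^{n-1}X$ and $\gamma\colon\wedge^{n-1}X\otimes X\to L$, obtained by including $\wedge^n X\hookrightarrow X^{\otimes n}$ and projecting on the appropriate side. Using these together with the duality data $(\co_L,\ev_L)$ for the invertible $L$ and the braiding, I would set
\[
\co_X:=\lambda\cdot(\alpha\otimes\id_{L^{-1}})\circ\co_L,\qquad\ev_X:=\ev_L\circ(\gamma\otimes\id_{L^{-1}})\circ(\id_{\wedge^{n-1}X}\otimes\sigma_{L^{-1},X}),
\]
for an appropriate scalar $\lambda\in K^{\times}$ to be fixed below.

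Verifying the snake identity $(\id_X\otimes\ev_X)(\co_X\otimes\id_X)=\id_X$ reduces, after using naturality of $\sigma$ to move the braiding of $L^{-1}$ past $X$ to the start of the composite, to understanding the natural morphism
\[
\psi:=(\id_X\otimes\gamma)\circ(\alpha\otimes\id_X)\colon L\otimes X\longrightarrow X\otimes L.
\]
Once $\psi$ is isolated, the remaining composite is the canonical braiding-loop $X\to L\otimes L^{-1}\otimes X\xrightarrow{\sigma_{L\otimes L^{-1},X}}X\otimes L\otimes L^{-1}\to X$, which by coherence and invertibility of $L$ contributes a factor of $\dim L=\pm 1$ times $\id_X$. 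So if $\psi=c\cdot\sigma_{L,X}$ for an invertible scalar $c$, the snake composite equals $c\lambda(\dim L)\cdot\id_X$, and $\lambda:=(c\dim L)^{-1}$ secures the identity; the second snake gives the same scalar $c$ by a parallel analysis. A direct basis-level computation on $X=\bk^n$ inside $\Rep_{\bk}GL_n$ predicts $c=(-1)^{n-1}/n$, which is invertible in $K$ because $n!$ is.

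The main obstacle is proving $\psi=c\cdot\sigma_{L,X}$ in the given 2-rig $\cC$. In the free 2-rig $\Sym$, Pieri gives $L\otimes X\cong X\otimes L\cong\wedge^{n+1}X\oplus S^{(2,1^{n-1})}X$, making $\Hom(L\otimes X,X\otimes L)$ two-dimensional; indeed a direct calculation in $\bk S_{n+1}$ shows $\psi$ acts as the scalar $1$ on the $\wedge^{n+1}$-summand while $\sigma_{L,X}$ acts there as $(-1)^{n}$, versus the ratio $(-1)^{n-1}/n$ on the hook summand, so $\psi\ne c\cdot\sigma_{L,X}$ universally. My strategy is to establish, as an intermediate lemma, that invertibility of $\wedge^n X$ forces $\wedge^{n+1}X=0$, eliminating the offending summand and reducing $\Hom(L\otimes X,X\otimes L)$ to a single dimension in which $\psi$ and $\sigma_{L,X}$ are automatically proportional. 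For this vanishing the natural approach is to combine the canonical embedding $\wedge^{n+1}X\otimes L^{-1}\hookrightarrow X$ (coming from $\wedge^{n+1}X\hookrightarrow X\otimes L$ and invertibility of $L$) with the retraction $X\twoheadrightarrow\wedge^{n+1}X\otimes L^{-1}$ (from the wedge multiplication $X\otimes L\twoheadrightarrow\wedge^{n+1}X$), exhibiting $\wedge^{n+1}X\otimes L^{-1}$ as a direct summand of $X$, and then to exploit a further structural identity forced by invertibility of $L$ to conclude this summand is zero. This vanishing step is where I expect the genuine technical difficulty of the proof to lie.
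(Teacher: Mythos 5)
Your setup---the candidate dual $\wedge^{n-1}X\otimes L^{-1}$ and the (co)evaluations obtained from the inclusion and projection between $\wedge^{n}X$ and $X\otimes\wedge^{n-1}X$---is exactly the paper's. The divergence, and the gap, lies in how you verify the snake relations. You reduce everything to the claim that $\psi\colon L\otimes X\to X\otimes L$ is an invertible scalar multiple of $\sigma_{L,X}$, and you propose to obtain this from the intermediate lemma that invertibility of $\wedge^{n}X$ forces $\wedge^{n+1}X=0$. That lemma is false: for a fermionic line object $X$ (the odd line in super vector spaces, say) one has $\sigma_{X,X}=-\id$, so every transposition acts by its sign on $X^{\otimes k}$, the antisymmetriser $e_k$ acts as the identity, and $\wedge^{k}X=X^{\otimes k}$ for all $k$; hence $\wedge^{n}X$ is invertible while $\wedge^{n+1}X\neq 0$. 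This is precisely why Definition~\ref{DefDim}(2) must exclude fermionic line objects, and why Theorem~\ref{ThmMain}(3)---which does prove the vanishing you want, but only under that exclusion---requires the Deligne-category machinery and already uses Theorem~\ref{ThmRig} as an input, so it cannot be fed back into this proof. (A further, smaller issue: the theorem only assumes $n!$ invertible in $K$, so $\wedge^{n+1}X$ need not even be defined.) Since you yourself identify this vanishing as the technical heart of your argument, the proposal does not close.

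The paper's proof sidesteps the proportionality $\psi\propto\sigma_{L,X}$ entirely. Writing $\phi\colon X\to X$ for the one-sided snake composite built from the uncorrected morphisms $\epsilon$ and $\delta$, it uses only two facts: that $\id_{L\otimes L}$ may be replaced by $\pm\sigma_{L,L}$ for the invertible object $L$ (which lets a second copy of $\phi$ be slid around the $L$-loop when computing $\phi^2$), and the idempotent identity \eqref{ipi}. Together these yield the quadratic relation $\phi^{2}=\tfrac{1}{n}\id_X\pm\tfrac{1-n}{n}\phi$, whence $\phi$ is invertible with explicit inverse $n\phi\pm(n-1)\id_X$; setting $\ev_X:=\epsilon\circ(Y\otimes\phi^{-1})$ gives one snake relation by construction and the other from the analogous quadratic relation on the $Y$-side. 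If you want to salvage your route, you would have to prove $\psi=c\cdot\sigma_{L,X}$ by some other means (it does happen to hold in the fermionic counterexample, where all the relevant hom-spaces are free of rank one), but you cannot reach it through $\wedge^{n+1}X=0$.
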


\begin{example}
Let $R$ be a commutative ring with $R$-module $M$. If $\wedge^n M$, defined as a quotient of $M^{\otimes^n_R}$, is finitely generated projective of rank~1, then $M$ is finitely generated projective and of constant rank, by \cite[Theorem~2.4]{Ram} or \cite[Proposition~3.1.1]{Sc}. Theorem~\ref{ThmRig} gives an alternative proof of this fact, assuming that $n!$ is invertible in $R$.
\end{example}

\begin{proof}[Proof of Theorem~\ref{ThmRig}]

We denote by $L$ the dual of $\wedge^n X$ and set $Y:=L\otimes \wedge^{n-1}X$.
We define $\epsilon:Y\otimes X\to\unit$ as the composite
$$L\otimes \wedge^{n-1}X \otimes X \xrightarrow{L\otimes\pi}L\otimes \wedge^n X\xrightarrow{\ev_{\wedge^nX}}\unit,  $$
with $\pi=e_n\circ \id\circ (e_{n-1}\otimes X)$. We also define $\delta:\unit\to X\otimes Y$ as the composite
$$\unit\xrightarrow{\co_{\wedge^n X}}\wedge^n X\otimes L\xrightarrow{\iota\otimes L}\wedge^{n-1}X\otimes X\otimes L\xrightarrow{\sigma_{\wedge^{n-1}X,X\otimes L}}X\otimes L\otimes \wedge^{n-1}X,$$
with $\iota=(e_{n-1}\otimes X)\circ \id\circ e_n$. In particular, we have
\begin{equation}\label{ipi}
\iota\circ\pi=
\end{equation}
$$ \frac{1}{n}\left(e_{n-1}\otimes X+(1-n)(e_{n-1}\otimes X)\circ (X^{\otimes (n-2)}\otimes \sigma_{X,X})\circ (e_{n-1}\otimes X)\right).$$

We will show that, up to a potential renormalisation, $\epsilon$ and $\delta$ satisfy the snake relations.
We thus consider the composites
$$\phi: X\xrightarrow{\delta\otimes X} X\otimes Y\otimes X\xrightarrow{X\otimes\epsilon}X\quad\mbox{and}\quad\psi:Y\xrightarrow{Y\otimes \delta}Y\otimes X\otimes Y\xrightarrow{\epsilon\otimes Y}Y.$$

 We start by computing $\phi^2$, a computation which is most intuitively considered diagrammatically, see Appendix~\ref{app}. Using the fact that $L$ is invertible to replace $\id_{L\otimes L}$ by $\pm \sigma_{L,L}$, we find
$$\phi^2\;=\;\pm A\circ B\circ C$$
with $B=L\otimes e_n\otimes X$ and
$$A=(\ev_{\wedge^n X}\otimes X)\circ (L\otimes e_n\otimes X)\circ (L\otimes e_{n-1}\otimes \sigma_{X,X})$$
and
$$C=(L\otimes e_{n-1}\otimes \sigma_{X,X})\circ(L\otimes e_n\otimes X)\circ( \co_L\otimes X). $$
Subsequently applying \eqref{ipi} then shows
\begin{equation}\label{phi2}
\phi^2\;=\; \frac{1}{n}\id_X \pm \frac{1-n}{n}\phi.
\end{equation}

Hence $\phi$ admits two-sided inverse $\phi^{-1}=n\phi\pm (n-1)\id_X$. We now set 
$$\ev_X:=\epsilon\circ(Y\otimes \phi^{-1})\quad\mbox{and}\quad \co_X:=\delta.$$
Then by construction
$$(X\otimes \ev_X)\circ (\co_X\otimes X)\;=\;\id_X.$$
On the other hand, by plugging in the explicit form of $\phi^{-1}$, we find
$$(\ev_X\otimes Y)\circ (Y\otimes \co_X)\;=\;n\psi^2\pm(n-1)\psi\;=\;\id_Y,$$
where the last equality follows from the immediate analogue of \eqref{phi2} for~$\psi$. It follows that $Y$ is the dual of $X$.
\end{proof}

\subsection{Bosonic dimensions}
Now let $\bk$ be a field of characteristic zero and let $\cC$ be a 2-rig over $\bk$.
The following definition was suggested in \cite[\S 8]{BMT}.

\begin{definition}\label{DefDim}
\begin{enumerate}
\item An object $X\in\cC$ has bosonic subdimension $n\in\mZ_{\ge 0}$ if $\wedge^{n+1}X=0$.
\item An object $X\in\cC$ has bosonic dimension $n\in\mZ_{>0}$ if it is not a fermionic line object and $\wedge^{n}X$ is a bosonic line object.
\end{enumerate}
\end{definition}

\begin{example}\label{RemMn}
Lemma~\ref{LemMn} states that $\Rep_{\bk}M_n$ is the universal 2-rig on an object of bosonic subdimension $n$.
\end{example}

\begin{remark}\label{RemDef}\begin{enumerate}
\item Either an object has no bosonic subdimensions, or its bosonic subdimensions form a half-infinite interval. As follows from Theorem~\ref{ThmMain}(3) below, a bosonic dimension is unique (in fact, it must equal the categorical dimension by Theorem~\ref{ThmMain}(2)), see also part (2) of this remark. 
\item In \cite[Definition~8.2]{BMT}, the condition that $X$ not be a fermionic line element was not stated. Without it, every fermionic line object would have bosonic dimension $2m$ for every $m\in \mZ_{>0}$, and \cite[Conjecture~8.6]{BMT} would require an exception.
\item Having excluded fermionic line objects in Definition~\ref{DefDim}(2), one can now actually drop the demand that $\wedge^n X$ be a {\em bosonic} line object, as is clear from the proof of Theorem~\ref{ThmMain}(2).
\end{enumerate}
\end{remark}

The following confirms \cite[Conjectures~8.6 and 8.8]{BMT}. 
\begin{theorem}\label{ThmMain}
\begin{enumerate}
\item $\Rep_{\bk} GL_n$ is the universal 2-rig\footnote{As in Footnote 1, this means that there is an equivalence from the category (groupoid) of $\bk$-linear symmetric monoidal functors $\Rep_{\bk} GL_n\to\cC$, and natural transformations (which are automatically natural isomorphisms since $\Rep_{\bk} GL_n$ is rigid) between them, to the category of objects in $\cC$ of bosonic dimension $n$, and isomorphisms between them. The equivalence is given by evaluation on the vector representation of $GL_n$.} on an object of bosonic dimension~$n$.
\item If an object has bosonic dimension $n$, then it also has bosonic subdimension $n$.
\end{enumerate}
\end{theorem}
\begin{proof}

For (1), let $X$ be an object in a 2-rig $\cC$ with $\wedge^n X$ invertible (with Remark~\ref{RemDef}(3) in mind, we do not impose it is bosonic). By Theorem~\ref{ThmRig}, $X$ is rigid. Set $t:=\dim X\in\End(\unit)$. By $\dim(\wedge^n X)=\pm 1$ and \cite[(7.1.2)]{Del90},
\begin{equation}\label{eqt}\frac{t(t-1)\cdots (t-n+1)}{n!}\;=\; \pm 1.\end{equation}
So $t$ generates a finite field extesnsion $\bk\subset\bk'\subset\End(\unit)$. Since $\cC$ is automatically $\End(\unit)$-linear, there is a unique $\bk'$-linear symmetric monoidal functor $F:(\Rep_{\bk'} GL)_t\to \cC$ with $\bx_t\mapsto X$. Now $F$ is not faithful, since $\wedge^2(\wedge^n X)=0$ or $S^2(\wedge^n X)=0$, forcing $t\in\mZ$, see for instance \cite[6.2.3]{Selecta}, and in particular $\bk'=\bk$. Moreover, by \eqref{eqt}, either $t=n$ or $t=-1$. We consider first the case $t=n$. We already observed that non-zero morphisms in $\End(\bx_n^{\otimes 2n})$ are sent to zero in~$\cC$. Since $2n<2(n+1)$, the kernel of $F$ must be the unique maximal tensor ideal $\cI$, by \cite[Theorems~7.2.1(ii) and ~8.2.1(i)]{Selecta}, for which $(\Rep GL)_n/\cI\simeq \Rep GL_n$, see \cite[Th\'eor\`eme~10.4]{Del07}. 

To conclude the proof of part (2), we thus only need to exclude the case $t=-1$. Since both $\wedge^2(\wedge^n X)$ and $S^2(\wedge^n X)$ are direct sums of $\mS_\lambda X$ (see \cite[\S 1.4]{Del02} for the definition of the Schur functors $\mS_\lambda$) for partitions $\lambda\vdash 2n$ with $\lambda_1\le 2$, it follows again, now by \cite[Theorem~7.2.1(i)]{Selecta}, that the kernel of $F$ is the maximal ideal $\cI$. However, in this case \cite[Th\'eor\`eme~10.4]{Del07} states that $(\Rep GL)_{-1}/\cI$ is the category of $\mZ$-graded super vector spaces with sign twisted symmetric braiding (the representation category of the supergroup $GL_{0|1}$), so that $\bx_{-1}$ is a fermionic line object in the quotient. This is the case exludeded in Definition~\ref{DefDim}(2).

Part (2) follows immediately from the combination of part (1), Example~\ref{RemMn} and the inclusion $\Rep M_n\subset\Rep GL_n$.
\end{proof}

\begin{remark}
The analogue of Theorem~\ref{ThmMain}(3) is known for $\cC$ abelian and rigid, in which case $\bk$ is allowed to be of arbitrary characteristic, see~\cite[Proposition~2.3.2]{CEN}.
\end{remark}

\begin{corollary}
The following conditions are equivalent on $X\in\cC$ and $n\in\mZ_{>0}$.
\begin{enumerate}
\item $X$ has bosonic dimension $n$;
\item $X$ has bosonic subdimension $n$ and $\wedge^n X$ is invertible;
\item $X$ has categorical dimension $n$ and $\wedge^n X$ is invertible;
\item $X$ has bosonic subdimension $n$ and is rigid with categorical dimension $n$.
\end{enumerate}
\end{corollary}
\begin{proof}
That (1) implies the other conditions follows from Theorem~\ref{ThmMain}. That (2) implies (1), resp. (3) implies (1), follows by definition and the fact that fermionic line objects do not have finite bosonic subdimension, resp. have categorical dimension $-1\not=n$.

Finally, that (4) implies (1) follows from the proof of Theorem~\ref{ThmMain}. Indeed, if $X$ has categorical dimension $n$, we have the corresponding symmetric monoidal functor $F:(\Rep GL)_n\to\cC$. As argued in the proof of Theorem~\ref{ThmMain}, the condition $\wedge^{n+1}X=0$ then implies that $F$ factors through the quotient $\Rep GL_n$.
\end{proof}

\subsection{The fermionic case}

\begin{definition}\label{DefDim2}
\begin{enumerate}
\item An object $X\in\cC$ has fermionic subdimension $n\in\mZ_{\ge 0}$ if $S^{n+1}X=0$.
\item A non-zero object $X\in\cC$ has fermionic dimension $n\in\mZ_{>0}$ if it is not a bosonic line object and $S^{n}X$ is invertible.
\end{enumerate}
\end{definition}

With Definition~\ref{DefDim2}(2), which deviates again slightly from \cite[Definition~8.2]{BMT}, it now follows, for $X$ of fermionic dimension $n$, that $S^{n}X$ is bosonic if $n$ is even and fermionic if $n$ is odd. The proof of the following theorem is identical to that of Theorem~\ref{ThmMain}. It implies the second case in \cite[Conjecture~8.6]{BMT}.
\begin{theorem}\label{ThmMain2}
\begin{enumerate}
\item $\Rep_{\bk} (GL_n,\varepsilon)$ is the universal 2-rig on an object of fermionic dimension~$n$, its subcategory of polynomial representations is the universal 2-rig on an object of fermionic subdimension~$n$.
\item If an object has fermionic dimension $n$, then it also has fermionic subdimension $n$, and categorical dimension $-n$.
\end{enumerate}

\end{theorem}

\subsection{Variations}
We conclude this note with some refinements of the universal properties as suggested in \cite{BT}.

Parts (1) and (4) of the following theorem confirm \cite[Conjectures~35 and~36]{BT}.
\begin{theorem}\label{ThmOSp}
\begin{enumerate}
\item $\Rep_{\bk}O_m$ is the universal 2-rig on a symmetrically self-dual object of bosonic dimension $m\in\mZ_{>0}$.
\item $\Rep_{\bk}(Sp_{2n},\varepsilon)$ is the universal 2-rig on a symmetrically self-dual object of fermionic dimension $2n$ ($n\in\mZ_{>0}$).
\item $\Rep_{\bk}(O_m,\varepsilon)$ is the universal 2-rig on a skew symmetricaly self-dual object of fermionic dimension $m\in\mZ_{>0}$.
\item $\Rep_{\bk}Sp_{2n}$ is the universal 2-rig on a skew symmetrically self-dual object of bosonic dimension $2n$ ($n\in\mZ_{>0}$).
\end{enumerate}
\end{theorem}
\begin{proof}
We start with part (1). Let $X\in\cC$ be a symmetrically self-dual object of bosonic dimension $m$. Then it is also of categorical dimension $m$ and hence there is a unique symmetric monoidal functor $(\Rep O)_m\to\cC$ with $\bv_m\mapsto X$. It suffices to observe that the kernel of this functor is the unique maximal tensor ideal, so that the conclusion follows from \cite[Th\'eor\`eme~9.6]{Del07}. For this observation, we can consider $(\Rep GL)_m\to(\Rep O)_m$, $\bx_m\mapsto \bv_m$. By the proof of Theorem~\ref{ThmMain}, the kernel of the composite $(\Rep GL)_m\to \cC$ is the maximal tensor ideal in $(\Rep GL)_m$. It follows from \cite[Theorems~7.1.1(ii) and~7.2.1(ii)]{Selecta} (and faithfulness of restriction functors between supergroup representation categories) that the only tensor ideal in $(\Rep O)_m$ which has as preimage the maximal tensor ideal in $(\Rep GL)_m$ is precisely the maximal tensor ideal in  $(\Rep O)_m$, which concludes the argument.

Part (2) follows from an almost identical argument, since a symmetrically self-dual object $X\in\cC$ be of fermionic dimension $2n$ leads, via Theorem~\ref{ThmMain2}, to a unique symmetric monoidal functor $(\Rep O)_{-2n}\to\cC$ with $\bv_{-2n}\mapsto X$, so we can apply the exact same theorems in \cite{Del07, Selecta}.

Part (3) is a standard consequence of part (1). Indeed, let $X\in\cC$ be a skew symmetrically self-dual object of fermionic dimension $m$. Then $X\otimes \bar{\unit}$ in $\cC\otimes\sVec$ is a symmetrically self-dual object of bosonic dimension $m$, so that by part (1) we have a symmetric monoidal functor
$$\Rep O_m\;\to\;\cC\otimes\sVec,\quad \bv_m\mapsto X\otimes\bar{\unit},$$
where, with slight abuse of notation, we write $\bv_m$ for the vector representation of $O_m$. We can combine this with the inclusion of $\sVec$, to obtain a
symmetric monoidal functor
$$\Rep O_m\otimes \sVec\;\to\;\cC\otimes\sVec,\quad \bv_m\otimes\bar{\unit}\mapsto X\otimes{\unit},$$
no longer defined by the the prescription on the right-hand side. Now $\Rep (O_m,\varepsilon)$ is precisely the symmetric monoidal subcategory of $\Rep O_m\otimes \sVec$ generated by $\bv_m':=\bv_m\otimes\bar{\unit}$. Hence the functor that sends a symmetric monoidal functor $F:\Rep (O_m,\varepsilon)\to\cC$ to $F(\bv_m')$ is an essentially surjective functor from the category of such monoidal functors to the category of skew symmetrically self-dual object of fermionic dimension $m$ (and isomorphisms) in $\cC$. The same reasoning shows it is fully faithful.

Part (4) follows from part (2) identically to how (3) follows from~(1).
\end{proof}

\begin{remark}
It follows from the proof (mainly \cite[Th\'eor\`eme~9.6]{Del07}) that the `remaining cases' in Theorem~\ref{ThmOSp} do not occur. For example, a symmetrically self-dual object in a 2-rig cannot have an odd fermionic dimension. One can see this directly. If $X$ is symmetrically self-dual, then so $S^n X$. So if $X$ has fermionic dimension $2m+1$ then the fermionic line object $S^{2m+1}X$ is supposed to be symmetrically self-dual, a contradiction.
\end{remark}

The following interpretation of \cite[Conjecture~37]{BT} is a consequence of the diagrammatic presentation from \cite{LZ}. We denote by $\bv_m$ the natural representation of $SO_m$, and by $\Delta:\unit\to \wedge^m\bv_m$ the morphism that sends $1$ to the skew symmetrisation of $e_1\otimes e_2\otimes\cdots\otimes e_m$, for a basis $\{e_1,\ldots, e_m\}$ of $\bv_m$. 
For a morphism $\alpha:X\to Y$ in a 2-rig we write $\alpha^\ast:Y^\ast\to X^\ast$ for the adjoint morphism.

\begin{theorem}\label{ThmSO}
For a 2-rig $\cC$ over $\bk$, the assignment that sends a symmetric monoidal functor $F:\Rep SO_m\to\cC$ to the pair of object $F(\bv_m)$ and morphism $F(\Delta):\unit\to \wedge^m F(\bv_m)$ yields an equivalence between the category of such symmetric monoidal functors and the category of pairs $(X,\alpha)$ of $X\in\cC$ and $\alpha:\unit\to \wedge^mX$, where $X$ is a symmetrically self-dual object and $\alpha$ is an isomorphism with inverse given by its normalised conjugate $\alpha^\ast/m!$. A morphism $(X,\alpha)\to(Y,\beta)$ is an isomorphism $X\to Y$ producing a commutative triangle with $\alpha$ and $\beta$.
\end{theorem}
\begin{proof}
Let $\cB(m)$ the Brauer category over $\bk$ at parameter $m$ (which has $(\Rep O)_m$ as Cauchy completion). The enhanced Brauer category $\widetilde{\cB}(m)$ from \cite[\S 5]{LZ} is obtained by adjoining a single morphism $\Delta_m:\unit\to \bv_m^{\otimes m}$ to $\cB(m)$ satisfying the relations
$$\Delta_m\circ \Delta_m^\ast\;=\; a_m\quad\mbox{and}\quad \Delta^\ast_m\circ \Delta_m=m!\cdot\id_{\unit}.$$
More precisely, the first relation is (4) in \cite[Definition~5.1]{LZ}, while the second relation is derived in \cite[Lemma~5.4]{LZ} from the defining relations. Conversely, the missing relations (2), (3) in \cite[Definition~5.1]{LZ} can be derived from the above relations.

Building further on the universal property of $\cB(m)$ in \cite[Proposition~9.4]{Del07}, it thus follows that the universal property of $\widetilde{\cB}(m)$ is such that it classifies morphisms $\beta:\unit\to Y^{\otimes m}$ where $Y$ is symmetrically self-dual (of categorical dimension $m$) and $\beta\circ\beta^\ast =a_m$ and $\beta^\ast\circ \beta=m!$. In particular $\beta=e_m\circ \beta$ and if we are only dealing with idempotent complete categories we can reinterpret the universal property of the Cauchy completion of $\widetilde{\cB}(m)$ as classifying morphisms $\beta:\unit\to\wedge^m Y$ with $\beta^\ast\circ\beta =m!$ and $\beta\circ \beta^\ast=m!$.

We have thus proved that the Cauchy completion of $\widetilde{\cB}(m)$ has the universal property desired from $\Rep SO_m$, and by \cite[Theorem~6.1]{LZ}, the two are equivalent.
\end{proof}

The universal property for $SL_n$ can be proved similarly to the one for $SO_m$ above, using diagrammatic presentations. However, we choose a categorical proof, to display some more methods. Of course, we could also prove the universal property for $SO_m$ as in the proof below. The method then actually simplifies, since $O_m/SO_m$ is finite, contrary to $GL_n/SL_n$.

The following proves \cite[Conjecture~34]{BT}.  We denote by $\bx_n$ the natural representation of $SL_n$ (and of $GL_n$), and by $\Delta:\unit\to \wedge^n\bx_n$ the morphism that sends $1$ to the skew symmetrisation of $e_1\otimes e_2\otimes\cdots\otimes e_n$. 
\begin{theorem}
For a 2-rig $\cC$ over $\bk$, the assignment that sends a symmetric monoidal functor $F:\Rep SL_n\to\cC$ to the pair of object $F(\bx_n)$ and morphism $F(\Delta):\unit\to \wedge^nF(\bx_n)$ yields an equivalence between the category of such symmetric monoidal functors and the category of pairs $(X,\alpha)$ of $X\in\cC$ and isomorphism $\alpha:\unit\to \wedge^nX$. A morphism $(X,\alpha)\to(Y,\beta)$ is an isomorphism $X\to Y$ producing a commutative triangle with $\alpha$ and $\beta$.
\end{theorem}
\begin{proof}
As $SL_n<GL_n$ is a normal closed subgroup, with quotient the multiplicative group $\mG_m$, it follows that the category of all (not just finite dimensional) rational representations $\Rep^\infty SL_n$ equivalent to the category of $A$-modules in $\Rep^\infty GL_n$, where
$$A\;\simeq\;\bk[x,x^{-1}]\;\simeq\;\cO(\mG_m)\;\simeq\;\cO(GL_n)^{SL_n}.$$
This is an algebra in $\Rep^\infty GL_n$, for the left regular action of $GL_n$ on the (right) invariants $\cO(GL_n)^{SL_n}$. This means that $x$ is to be identified with the invertible object $\wedge^n\bv_n$. A proof of this general principle follows for instance from \cite[Lemma~6.2.1]{Incomp} in combination with the main result of \cite{CPS}.

Since the representation categories of $SL_n$ and $GL_n$ are semisimple, the infinite representation categories are both the ind-completions as well as the presheaf categories, e.g.
$$\Rep^\infty SL_n\;\simeq\;\Ind\Rep SL_n\;\simeq\; \PSh\Rep SL_n,$$
where for a $\bk$-linear category $\cA$ we denote by $\PSh\cA$ the presheaf category of $\bk$-linear functors $\cA^{\op}\to\Vecc^\infty$. If $\cA$ is (symmetric) monoidal, then so is $\PSh\cA$ via Day convolution.

Let us now first assume that $\cC$ is cocomplete. Then by \cite[Theorem~5.1]{IK} the category of symmetric monoidal functors $\Rep SL_n\to\cC$ is equivalent with the category of cocontinuous symmetric monoidal functors $\Rep^\infty SL_n\to\cC$. The latter is equivalent, by \cite[Proposition~5.3.1]{Br}, with the category of pairs of a cocontinuous symmetric monoidal functor $F:\Rep^\infty GL_n\to\cC$ and an algebra morphism $F(A)\to\unit$. By our simple form of the algebra $A$, the data of such an algebra morphism is equivalent to the choice of an isomorphism $F(\wedge^n\bx_n)=F(x)\to\unit$. In conclusion, we find an equivalence between the category of symmetric monoidal functors $\Rep SL_n\to\cC$ and the category of pairs of a symmetric monoidal functor $H:\Rep GL_n\to\cC$ and an isomorphism $H(\wedge^n \bx_n)\simeq \unit$. We can thus invoke Theorem~\ref{ThmMain}(1) and one can trace through the argument to verify the equivalence corresponds to the one spelled out in the theorem.

Now, in case $\cC$ is not cocomplete, we can replace it with the cocomplete $\PSh\cC$ and observe that the category of symmetric monoidal functors $\Rep SL_n\to\cC$ is equivalent with the full subcategory of symmetric monoidal functors $\Rep SL_n\to\PSh\cC$ that happen to take values in (equivalently, send $\bx_n$ into) $\cC$ (under the Yoneda embedding $\cC\subset\PSh\cC$).
\end{proof}

\begin{remark}We point out the asymmetry between the universal properties for $SL_n$ and $SO_m$, {\it i.e.} the need to specify the inverse of the isomorphism in Theorem~\ref{ThmSO}, which relates to the choices involved in picking a dual.
\end{remark}


\appendix

\section{Diagrammatic proof of equation~\eqref{phi2}}\label{app}

We consider the case $n=3$ and assume that $\wedge^3X$ is a bosonic line object. We draw identity morphisms of $X$ with solid lines and identity morphisms of $\wedge^3X$ and its dual $L$ by dashed lines. Diagrams go from top to bottom, crossings represent braiding isomorphisms and (dashed) cups and caps represent (co)evaluations (of invertible objects). We denote by rectangular boxes the idempotent $e_3$. We remove the idempotents $e_2\otimes X$ from the diagrams, which play no role as they are all sandwiched between idempotents of the form $e_3$.
 We can then represent $\phi^2$, and its rewritten version using $\sigma_{L,L}=\id$, as

\vspace{-7mm}

\begin{center}

\begin{tikzpicture}[scale=0.5]
\def\rectwidth{1}
\def\rectheight{0.5}

\coordinate (A) at (4,0);
\coordinate (B) at (2,2);
\coordinate (C) at (8,2);
\coordinate (D) at (6,4);

\newcommand{\rect}[1]{
  \draw[fill=gray!20]
    (#1) ++(-\rectwidth/2,-\rectheight/2)
    rectangle ++(\rectwidth,\rectheight);
}

\rect{A}
\rect{B}
\rect{C}
\rect{D}

\path (A) ++(-\rectwidth/2,\rectheight/2) coordinate (A-1);
\path (A) ++(0,\rectheight/2) coordinate (A-2);
\path (A) ++(\rectwidth/2,\rectheight/2) coordinate (A-4);

\path (B) ++(-\rectwidth/2,-\rectheight/2) coordinate (B-1);
\path (B) ++(0,-\rectheight/2) coordinate (B-2);
\path (B) ++(\rectwidth/2,-\rectheight/2) coordinate (B-4);
\path (B) ++(\rectwidth/2,-\rectheight/2-3) coordinate (BB);

\path (C) ++(-\rectwidth/2,\rectheight/2) coordinate (C-1);
\path (C) ++(0,\rectheight/2) coordinate (C-2);
\path (C) ++(\rectwidth/2,\rectheight/2) coordinate (C-4);
\path (C) ++(\rectwidth/2,\rectheight/2+3) coordinate (CC);

\path (D) ++(-\rectwidth/2,-\rectheight/2) coordinate (D-1);
\path (D) ++(0,-\rectheight/2) coordinate (D-2);
\path (D) ++(\rectwidth/2,-\rectheight/2) coordinate (D-4);

\path (C) ++(\rectwidth/2,0) coordinate (C-right);
\path (C) ++(-\rectwidth/2,0) coordinate (C-left);

\path (D) ++(\rectwidth/2,0) coordinate (D-right);
\path (D) ++(-\rectwidth/2,0) coordinate (D-left);

\path (A) ++(0,-\rectheight/2) coordinate (A-3);
\path (C) ++(0,-\rectheight/2) coordinate (C-3);
\path (C) ++(0,\rectheight/2) coordinate (C-top);

\path (B) ++(0,+\rectheight/2) coordinate (B-3);
\path (D) ++(0,+\rectheight/2) coordinate (D-3);

\draw[thick] (A-1) -- (B-1);
\draw[thick] (A-2) -- (B-2);
\draw[thick] (C-1) -- (D-1);
\draw[thick] (C-2) -- (D-2);
\draw[thick] (A-4) -- (D-4);
\draw[thick] (B-4) -- (BB);
\draw[thick] (C-4) -- (CC);

\draw[dash pattern=on 4pt off 2pt]
  (A-3) .. controls +(-1,-2.5) and +(1,2.5) .. (B-3);
  \draw[dash pattern=on 4pt off 2pt]
  (C-3) .. controls +(-1,-2.5) and +(1,2.5) .. (D-3);

\node at (9.5,2.5) {$=$};

\coordinate (E) at (13,0);
\coordinate (F) at (11,2);
\coordinate (G) at (17,2);
\coordinate (H) at (15,4);

\rect{E}
\rect{F}
\rect{G}
\rect{H}

\path (E) ++(-\rectwidth/2,\rectheight/2) coordinate (E-1);
\path (E) ++(0,\rectheight/2) coordinate (E-2);
\path (E) ++(\rectwidth/2,\rectheight/2) coordinate (E-4);

\path (F) ++(-\rectwidth/2,-\rectheight/2) coordinate (F-1);
\path (F) ++(0,-\rectheight/2) coordinate (F-2);
\path (F) ++(\rectwidth/2,-\rectheight/2) coordinate (F-4);
\path (F) ++(\rectwidth/2,-\rectheight/2-3) coordinate (FF);

\path (G) ++(-\rectwidth/2,\rectheight/2) coordinate (G-1);
\path (G) ++(0,\rectheight/2) coordinate (G-2);
\path (G) ++(\rectwidth/2,\rectheight/2) coordinate (G-4);
\path (G) ++(\rectwidth/2,\rectheight/2+3) coordinate (GG);

\path (H) ++(-\rectwidth/2,-\rectheight/2) coordinate (H-1);
\path (H) ++(0,-\rectheight/2) coordinate (H-2);
\path (H) ++(\rectwidth/2,-\rectheight/2) coordinate (H-4);

\path (G) ++(\rectwidth/2,0) coordinate (G-right);
\path (G) ++(-\rectwidth/2,0) coordinate (G-left);

\path (H) ++(\rectwidth/2,0) coordinate (H-right);
\path (H) ++(-\rectwidth/2,0) coordinate (H-left);

\path (E) ++(0,-\rectheight/2) coordinate (E-3);
\path (G) ++(0,-\rectheight/2) coordinate (G-3);
\path (G) ++(0,\rectheight/2) coordinate (G-top);

\path (F) ++(0,+\rectheight/2) coordinate (F-3);
\path (H) ++(0,+\rectheight/2) coordinate (H-3);

\draw[thick] (E-1) -- (F-1);
\draw[thick] (E-2) -- (F-2);
\draw[thick] (G-1) -- (H-1);
\draw[thick] (G-2) -- (H-2);
\draw[thick] (E-4) -- (H-4);
\draw[thick] (F-4) -- (FF);
\draw[thick] (G-4) -- (GG);

\draw[dash pattern=on 4pt off 2pt]
  (E-3) .. controls +(1,-2.5) and +(-1,2.5) .. (H-3);
\draw[dash pattern=on 4pt off 2pt]
  (G-3) .. controls +(-1,-2.5) and +(1,2.5) .. (F-3);
\end{tikzpicture}
\end{center}

\vspace{-7mm}

\noindent
Applying the snake relation for $L$ then allows us to simplify the above into

\vspace{-7mm}

\begin{center}

\begin{tikzpicture}[scale=0.5]

\def\rectwidth{1}
\def\rectheight{0.5}

\coordinate (A) at (4,0);
\coordinate (B) at (2,2);
\coordinate (C) at (2,2);
\coordinate (D) at (6,4);

\newcommand{\rect}[1]{
  \draw[fill=gray!20]
    (#1) ++(-\rectwidth/2,-\rectheight/2)
    rectangle ++(\rectwidth,\rectheight);
}

\rect{A}
\rect{B}
\rect{C}
\rect{D}

\path (A) ++(-\rectwidth/2,\rectheight/2) coordinate (A-1);
\path (A) ++(0,\rectheight/2) coordinate (A-2);
\path (A) ++(\rectwidth/2,\rectheight/2) coordinate (A-4);

\path (B) ++(-\rectwidth/2,-\rectheight/2) coordinate (B-1);
\path (B) ++(0,-\rectheight/2) coordinate (B-2);
\path (B) ++(\rectwidth/2,-\rectheight/2) coordinate (B-4);
\path (B) ++(\rectwidth/2,-\rectheight/2-3) coordinate (BB);

\path (B) ++(-\rectwidth/2,\rectheight/2) coordinate (C-1);
\path (B) ++(0,\rectheight/2) coordinate (C-2);
\path (B) ++(\rectwidth/2,\rectheight/2) coordinate (C-4);
\path (B) ++(\rectwidth/2,\rectheight/2+3) coordinate (CC);

\path (D) ++(-\rectwidth/2,-\rectheight/2) coordinate (D-1);
\path (D) ++(0,-\rectheight/2) coordinate (D-2);
\path (D) ++(\rectwidth/2,-\rectheight/2) coordinate (D-4);

\path (D) ++(\rectwidth/2,0) coordinate (D-right);
\path (D) ++(-\rectwidth/2,0) coordinate (D-left);

\path (A) ++(0,-\rectheight/2) coordinate (A-3);
\path (B) ++(0,-\rectheight/2) coordinate (C-3);

\path (B) ++(0,+\rectheight/2) coordinate (B-3);
\path (D) ++(0,+\rectheight/2) coordinate (D-3);

\draw[thick] (A-1) -- (B-1);
\draw[thick] (A-2) -- (B-2);
\draw[thick] (C-1) -- (D-1);
\draw[thick] (C-2) -- (D-2);
\draw[thick] (A-4) -- (D-4);
\draw[thick] (B-4) -- (BB);
\draw[thick] (B-4) -- (CC);

\draw[dash pattern=on 4pt off 2pt]
  (A-3) .. controls +(1,-2.5) and +(-1,2.5) .. (D-3);

\node at (8,2.5) {$=$};

\coordinate (E) at (10,0);
\coordinate (F) at (12,2);
\coordinate (G) at (12,2);
\coordinate (H) at (10,4);

\rect{E}
\rect{F}
\rect{G}
\rect{H}

\path (E) ++(-\rectwidth/2,\rectheight/2) coordinate (E-1);
\path (E) ++(0,\rectheight/2) coordinate (E-2);
\path (E) ++(\rectwidth/2,\rectheight/2) coordinate (E-4);

\path (F) ++(-\rectwidth/2,-\rectheight/2) coordinate (F-1);
\path (F) ++(0,-\rectheight/2) coordinate (F-2);
\path (F) ++(\rectwidth/2,-\rectheight/2) coordinate (F-4);
\path (F) ++(\rectwidth/2,-\rectheight/2-3) coordinate (FF);

\path (F) ++(-\rectwidth/2,\rectheight/2) coordinate (G-1);
\path (F) ++(0,\rectheight/2) coordinate (G-2);
\path (F) ++(\rectwidth/2,\rectheight/2) coordinate (G-4);
\path (F) ++(\rectwidth/2,\rectheight/2+3) coordinate (GG);

\path (H) ++(-\rectwidth/2,-\rectheight/2) coordinate (H-1);
\path (H) ++(0,-\rectheight/2) coordinate (H-2);
\path (H) ++(\rectwidth/2,-\rectheight/2) coordinate (H-4);

\path (H) ++(\rectwidth/2,0) coordinate (H-right);
\path (H) ++(-\rectwidth/2,0) coordinate (H-left);

\path (E) ++(0,-\rectheight/2) coordinate (E-3);
\path (F) ++(0,-\rectheight/2) coordinate (G-3);

\path (F) ++(0,+\rectheight/2) coordinate (F-3);
\path (H) ++(0,+\rectheight/2) coordinate (H-3);

\draw[thick] (E-1) -- (F-1);
\draw[thick] (E-2) -- (F-2);
\draw[thick] (G-1) -- (H-1);
\draw[thick] (G-2) -- (H-2);
\draw[thick] (E-4) -- (H-4);
\draw[thick] (F-4) -- (FF);
\draw[thick] (F-4) -- (GG);

\draw[dash pattern=on 4pt off 2pt]
  (E-3) .. controls +(-1.2,-2.5) and +(-1.2,2.5) .. (H-3);

\end{tikzpicture}

\end{center}

\vspace{-7mm}

\noindent
Via relation \eqref{ipi} and minor manipulations this then becomes

\vspace{-7mm}

\begin{center}

\begin{tikzpicture}[scale=0.5]
\def\rectwidth{1}
\def\rectheight{0.5}

\coordinate (A) at (4,0);
\coordinate (B) at (2,2);
\coordinate (C) at (2,2);
\coordinate (D) at (4,4);

\newcommand{\rect}[1]{
  \draw[fill=gray!20]
    (#1) ++(-\rectwidth/2,-\rectheight/2)
    rectangle ++(\rectwidth,\rectheight);
}

\rect{A}
\rect{D}

\path (A) ++(-\rectwidth/2,\rectheight/2) coordinate (A-1);
\path (A) ++(0,\rectheight/2) coordinate (A-2);
\path (A) ++(\rectwidth/2,\rectheight/2) coordinate (A-4);

\path (B) ++(-\rectwidth/2,-\rectheight/2) coordinate (B-1);
\path (B) ++(0,-\rectheight/2) coordinate (B-2);
\path (B) ++(\rectwidth/2,-\rectheight/2) coordinate (B-4);
\path (B) ++(\rectwidth/2,-\rectheight/2-3) coordinate (BB);

\path (B) ++(-\rectwidth/2,\rectheight/2) coordinate (C-1);
\path (B) ++(0,\rectheight/2) coordinate (C-2);
\path (B) ++(\rectwidth/2,\rectheight/2) coordinate (C-4);
\path (B) ++(\rectwidth/2,\rectheight/2+3) coordinate (CC);

\path (D) ++(-\rectwidth/2,-\rectheight/2) coordinate (D-1);
\path (D) ++(0,-\rectheight/2) coordinate (D-2);
\path (D) ++(\rectwidth/2,-\rectheight/2) coordinate (D-4);

\path (D) ++(\rectwidth/2,0) coordinate (D-right);
\path (D) ++(-\rectwidth/2,0) coordinate (D-left);

\path (A) ++(0,-\rectheight/2) coordinate (A-3);
\path (B) ++(0,-\rectheight/2) coordinate (C-3);

\path (B) ++(0,+\rectheight/2) coordinate (B-3);
\path (D) ++(0,+\rectheight/2) coordinate (D-3);

\draw[thick] (A-1) -- (D-1);
\draw[thick] (A-2) -- (D-2);
\draw[thick] (A-4) -- (D-4);
\draw[thick] (5,-1) -- (5,5);

\draw[dash pattern=on 4pt off 2pt]
  (A-3) .. controls +(-2,-2.5) and +(-2,2.5) .. (D-3);

\node at (2,2.5) {$\frac{1}{3}$};

\node at (7,2.5) {$-\quad \frac{2}{3}$};

  \coordinate (E) at (10,0);
\coordinate (F) at (8,2);
\coordinate (G) at (8,2);
\coordinate (H) at (10,4);

\rect{E}
\rect{H}

\path (E) ++(-\rectwidth/2,\rectheight/2) coordinate (E-1);
\path (E) ++(0,\rectheight/2) coordinate (E-2);
\path (E) ++(\rectwidth/2,\rectheight/2) coordinate (E-4);

\path (F) ++(-\rectwidth/2,-\rectheight/2) coordinate (F-1);
\path (F) ++(0,-\rectheight/2) coordinate (F-2);
\path (F) ++(\rectwidth/2,-\rectheight/2) coordinate (F-4);
\path (F) ++(\rectwidth/2,-\rectheight/2-3) coordinate (FF);

\path (F) ++(-\rectwidth/2,\rectheight/2) coordinate (G-1);
\path (F) ++(0,\rectheight/2) coordinate (G-2);
\path (F) ++(\rectwidth/2,\rectheight/2) coordinate (G-4);
\path (F) ++(\rectwidth/2,\rectheight/2+3) coordinate (GG);

\path (H) ++(-\rectwidth/2,-\rectheight/2) coordinate (H-1);
\path (H) ++(0,-\rectheight/2) coordinate (H-2);
\path (H) ++(\rectwidth/2,-\rectheight/2) coordinate (H-4);

\path (H) ++(\rectwidth/2,0) coordinate (H-right);
\path (H) ++(-\rectwidth/2,0) coordinate (H-left);

\path (E) ++(0,-\rectheight/2) coordinate (E-3);
\path (F) ++(0,-\rectheight/2) coordinate (G-3);

\path (F) ++(0,+\rectheight/2) coordinate (F-3);
\path (H) ++(0,+\rectheight/2) coordinate (H-3);

\draw[thick] (E-1) -- (H-1);
\draw[thick] (E-2) -- (H-2);
\draw[thick] (E-4) -- (11,5);
\draw[thick] (H-4) -- (11,-1);

\draw[dash pattern=on 4pt off 2pt]
  (E-3) .. controls +(-2,-2.5) and +(-2,2.5) .. (H-3);
  
\end{tikzpicture}\end{center}

\vspace{-7mm}

\noindent
Since $\dim(L)=1$, this is now $\frac{1}{3}\id_X-\frac{2}{3}\phi$ indeed.

\subsection*{Acknowledgement} The research was supported by ARC grant FT220100125. The author thanks John Baez for useful anddiscussions and pointing out reference \cite{BT} and Pasquale Zito for pointing out \cite{DR}.


\end{document}